\def\H{\mathcal{H}}
\def\U{\mathcal{U}}
\def\C{\mathbb{C}}
\def\G{\mathcal{G}}
\newtheorem{prop}[equation]{Proposition}
\newtheorem{lema}[equation]{Lemma} 
\newtheorem{teo}[equation]{Theorem}
\newtheorem{rem}[equation]{Remark}
\newtheorem{coro}[equation]{Corollary}
\newtheorem{defi}[equation]{Definition}
\newtheorem{exa}[equation]{Example}
\numberwithin{equation}{section}
\theoremstyle{plain}
\theoremstyle{definition}
\theoremstyle{remark}
\title{An imprimitivity theorem for finite algebraic quantum groups}
\author[1]{Eugenia Ellis}
\author[2]{Ana Gonz\'alez}
\author[3]{Gisela Tartaglia}
\thanks{The first author was partially supported by ANII, CSIC and PEDECIBA.  The second author was partially supported by ANII and CSIC. The first and third authors were partially supported by grant PICT-2021-I-A-0 0710. The third author was partially supported by grant PIP 11220200100423CO}
\address[1,2]{IMERL, Fac. de Ingenier\'ia, Universidad de la República, Montevideo, Uruguay}
\address[3]{CMaLP - CONICET, FCE-UNLP, La Plata, Argentina}
\email[1]{eellis@fing.edu.uy}
\email[2]{anagon@fing.edu.uy}
\email[3]{gtartaglia@mate.unlp.edu.ar}
\date{}
\begin{document}
\begin{abstract}
    Let $\mathcal{G}$ be an algebraic quantum group and $\U$ a compact quantum subgroup. Given a left $\hat{\U}$-module algebra $A$ with unit, we can endow  $A\otimes\mathcal{G}$ with a structure of a right $\hat{\U}$-module algebra. The algebra of invariants for this action $(A\otimes\mathcal{G})^{\hat{\U}}$ has a left action of $\hat{\mathcal{G}}$. We prove that for finite $\G$, $(A\otimes\mathcal{G})^{\hat{\U}}\#\hat{\mathcal{G}}$ is Morita equivalent to $A\#\hat{\mathcal{U}}$.
\end{abstract}

\maketitle

\section{Introduction}
Let $\mathcal{G}$ be an {\emph{algebraic quantum group}} in the sense of \cite{VD} and \cite{DVDZ}, that is, a regular multiplier Hopf algebra with invariants. A {\emph{finite algebraic quantum group}} is a finite dimensional algebraic quantum group, in other words, a finite dimensional Hopf algebra. 
The definition that we consider here differs from the one used in \cite{VD3}, since we do not require that the underlying algebras be equipped with an involution.

Algebraic quantum groups have a good duality framework. Every algebraic quantum group $\mathcal{G}$ has a dual $\hat{\mathcal{G}}$, which is again an algebraic quantum group and satisfies $\hat{\hat{\mathcal{G}}}\cong \mathcal{G} $. This is a generalization of the duality for finite dimensional Hopf algebras. An algebraic quantum group $\mathcal{G}$ is \emph{compact} if it has a unit.  Dually, $\mathcal{G}$ is \emph{discrete} if $\hat{\mathcal{G}}$ has unit.  We say that another algebraic quantum group $\mathcal{U}$ is a \emph{compact subgroup} of $\mathcal{G}$ if it is compact, unimodular and there exists a surjective morphism $\pi: \mathcal{G} \rightarrow \mathcal{U}$.

In this work we present an imprimitivity theorem for finite algebraic quantum groups. 
Recall from \cite{VDZ1} that $A$ is a left $\G$-module algebra if it satisfies the following conditions:
\begin{itemize}
    \item $A$ is a unital left $\G$-module, here $A$ unital means $\G A=A$;
    \item $x\rightharpoonup aa'= \sum (x_1\rightharpoonup a)(x_2\rightharpoonup a')$, $\forall x\in \G, a, a'\in A$.
\end{itemize}

The main result of this work is

\begin{teo} (Theorem \ref{morita})
  Let $\H$ be finite algebraic quantum group, $\U$ a compact quantum subgroup and $A$ a left $\hat{\U}$-module algebra with unit. The algebras $A\#\hat{\U}$ and $(A\otimes\H)^{\hat{\U}}\#\hat{\H}$ are Morita equivalent, where  $(A\otimes\H)^{\hat{\U}}$ is the invariant subalgebra for the following action:
\[
a\otimes h\leftharpoonup\beta=\sum \overline{S}(\beta_1)\rightharpoonup a\otimes h\leftharpoonup \beta_2.
\]
 \end{teo}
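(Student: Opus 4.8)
The plan is to recognize this statement as the algebraic quantum group incarnation of Green's imprimitivity theorem. Under the dictionary in which $\hat{\H}$ plays the role of a finite group $G$, the sub-object $\hat{\U}\subseteq\hat{\H}$ (dual to the surjection $\pi\colon\H\to\U$) plays the role of a subgroup $H$, and $\H=\widehat{\hat{\H}}$ plays the role of the function algebra $C(G)$, the algebra $A\otimes\H$ is the analogue of the $G$-algebra $C(G,A)$, its invariants $(A\otimes\H)^{\hat{\U}}$ are the induced algebra $\mathrm{Ind}_{H}^{G}A$, and the asserted equivalence is exactly $\mathrm{Ind}_H^G A\rtimes G\sim A\rtimes H$. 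Accordingly, I would prove the theorem by exhibiting an explicit imprimitivity bimodule carried by $M:=A\otimes\H$, equipped with commuting actions of the two smash products, and then verifying the Morita axioms. Since all objects are finite dimensional, it suffices to realize $M$ as a finitely generated projective generator for one of the algebras and to identify the other with $\mathrm{End}$ of $M$ over the first; equivalently, to build a full Morita context.

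First I would equip $M=A\otimes\H$ with its right $A\#\hat{\U}$-module structure: $A$ acts on the first tensorand by right multiplication, while $\hat{\U}$ acts through the right action $\leftharpoonup$ already used to define the invariants, and one checks that these two actions satisfy the smash-product compatibility, so that $(a\otimes h)\triangleleft(a'\#\beta)$ is well defined. Dually, I would put a left $(A\otimes\H)^{\hat{\U}}\#\hat{\H}$-module structure on $M$: the invariant subalgebra $(A\otimes\H)^{\hat{\U}}$ acts by left multiplication inside the algebra $A\otimes\H$, and $\hat{\H}$ acts through its canonical left action on the second tensorand $\H$, with the diagonal correction on $A$ dictated by $\overline{S}$. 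The first genuine verification is that the left $(A\otimes\H)^{\hat{\U}}\#\hat{\H}$-action and the right $A\#\hat{\U}$-action on $M$ commute; this is a bookkeeping computation with the antipode $\overline{S}$ and the two module structures, and it is exactly where the precise form of the invariance condition is used.

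Next I would construct the two bimodule-valued pairings that make the Morita context. The $A\#\hat{\U}$-valued inner product $\langle\,\cdot\,,\cdot\,\rangle_{A\#\hat{\U}}\colon M\otimes M\to A\#\hat{\U}$ is obtained by integrating over the $\hat{\H}$-variable via the Haar integral of $\H$ and restricting the result to $\hat{\U}$, while multiplying the $A$-components; the $(A\otimes\H)^{\hat{\U}}\#\hat{\H}$-valued inner product ${}_{(A\otimes\H)^{\hat{\U}}\#\hat{\H}}\langle\,\cdot\,,\cdot\,\rangle$ is obtained by averaging over $\hat{\U}$ with its two-sided integral — which exists because $\U$ is compact and unimodular — so as to produce an element of the invariants, recorded over $\hat{\H}$. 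One then checks that each pairing is sesquilinear for the appropriate actions and that the associativity identities ${}_{\bullet}\langle m,n\rangle\,m'=m\,\langle n,m'\rangle_{\bullet}$ hold; these again reduce to manipulations of the integrals and the antipode in Sweedler notation.

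The main obstacle, as in every imprimitivity theorem, is fullness: one must show that the images of the two inner products are all of $A\#\hat{\U}$ and of $(A\otimes\H)^{\hat{\U}}\#\hat{\H}$, respectively. For the $A\#\hat{\U}$-side this uses that $A$ is unital together with the normalization of the Haar integral of $\H$, placing $1\#1$ in the image. The $(A\otimes\H)^{\hat{\U}}\#\hat{\H}$-side is the delicate point: it rests on the fact that, since $\pi\colon\H\to\U$ is surjective and finite-dimensional Hopf algebras are free over their Hopf subalgebras (Nichols--Zoeller), the coinvariant inclusion $(A\otimes\H)^{\hat{\U}}\subseteq A\otimes\H$ is a faithfully flat Hopf--Galois (indeed cleft) extension for the $\hat{\U}$-action, whence $A\otimes\H$ is a projective generator over $(A\otimes\H)^{\hat{\U}}$. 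The two-sided integral of $\hat{\U}$ then yields a conditional-expectation--type projection onto the invariants whose surjectivity forces $1\#1$ to be a finite sum of inner products. Once both pairings are full and nondegenerate, the Morita context is strict and the two algebras are Morita equivalent. I expect the commutation of the two actions and this fullness step to absorb essentially all of the work, with the remaining identities being routine.
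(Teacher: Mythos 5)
Your starting point --- a single linking module $M=A\otimes\H$ carrying commuting actions of the two smash products --- is the same germ as the paper's, but the way you close the argument is genuinely different, and one step has a real gap. For comparison: the paper does not build two inner products on $M$. It (i) forms the generic Morita context between $A\#\hat{\U}$ and $\operatorname{End}_{A\#\hat{\U}}(A\otimes\H)$ using the dual module $\operatorname{Hom}_{A\#\hat{\U}}(A\otimes\H,A\#\hat{\U})$ (Lemma \ref{context}); (ii) shows by an explicit Sweedler computation that every $\hat{\U}$-equivariant endomorphism has invariant coefficients, so $(A\otimes\H)^{\hat{\U}}\#\hat{\H}\cong\operatorname{End}_{A\#\hat{\U}}(A\otimes\H)$; and (iii) gets surjectivity of the context maps by reducing to trivial coefficients $A^{\hat{\U}}$ (Corollary \ref{trivialaction}), where $A^{\hat{\U}}\otimes\H$ is a finitely generated projective generator using only the semisimplicity of $\hat{\U}$ and the integral $\gamma$ of $\U$ (via $a\otimes h\mapsto a\otimes\gamma^{S(\pi(h))}$). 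Your proposed replacement for (ii) --- fullness of the $(A\otimes\H)^{\hat{\U}}\#\hat{\H}$-valued pairing via Nichols--Zoeller and cleftness of $(A\otimes\H)^{\hat{\U}}\subseteq A\otimes\H$ --- is a plausible alternative mechanism and would be more conceptual if carried out, but as written it is an assertion; it is exactly the point where the paper spends most of its effort, so you cannot defer it as routine. Likewise, a Morita context requires two bimodules of opposite variance, so a pairing ``on $M\otimes M$'' only makes sense once you name the second module (the paper uses $\operatorname{Hom}_{A\#\hat{\U}}(A\otimes\H,A\#\hat{\U})$).

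The concrete gap is in fullness on the $A\#\hat{\U}$ side. You place $1\#1$ in the range of the inner product using ``the normalization of the Haar integral of $\H$.'' But the theorem imposes unimodularity (equivalently semisimplicity/cosemisimplicity) only on $\U$, not on $\H$. For a non-cosemisimple finite-dimensional $\H$ (e.g.\ a Taft algebra mapping onto its group-like quotient $\U$), the invariant functional $\varphi$ of $\H$ satisfies $\varphi(1)=0$, so it cannot be normalized and the pairing of $1_A\otimes 1_\H$ with itself vanishes. One could try to reach $1\#1$ by pairing general elements $1_A\otimes h$ with $1_A\otimes k$ and invoking faithfulness of $\varphi$, but that is a different and harder argument you would have to supply. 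The paper avoids the issue entirely: the only integral that is ever normalized is that of $\U$, which is legitimate by hypothesis, and the generator property for general $A$ is inherited from the trivial-action case by factoring $a\#\beta=(a\#\epsilon)(1\#\beta)$.
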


This is a imprimitivity theorem for a particular case of compact algebraic quantum groups, the finite dimensional one. A discrete version of Green's imprimitivity theorem for a particular case of discrete algebraic quantum groups is presented in \cite{CE}*{Section 10.4}, that is for $\hat{\mathcal{G}}=\mathbb{C}G$ a group ring.

The paper is organized as follows. In Section \ref{sec1} we review from \cite{DVDZ}, \cite{T}, \cite{VD2}, \cite{VD}, \cite{VD1} and \cite{VDZ1} the definition of an algebraic quantum group, the construction of its dual and the notions of action and coaction. We also introduce the notion of compact quantum subgroup (Definition \ref{cqs}). Given an algebraic quantum group $\G$, a compact quantum subgroup $\U$ and a left $\hat{\U}$-module algebra $A$ with unit, we endow $A\otimes\G$ with the structure of right $\hat{\U}$-module algebra. Then we show how to define a left $\hat{\G}$-action on the algebra of invariants $(A\otimes\G)^{\hat{\U}}$. In Section \ref{sec2} we consider the discrete quantum group $C_c(G)$ given by finitely supported complex valued functions on a group $G$. In this case, if $H\subseteq G$ is a finite subgroup, $C_c(H)$ is a compact quantum subgroup of $C_c(G)$. Given a left $\widehat{C_c(H)}=\C H$-module algebra $A$ with unit, we use the actions defined in the previous section to endow $A\otimes C_c(G)=C_c(G,A)$ with the structure of right $\C H$-module algebra. The algebra of invariants $C(G,A)^{\C H}$ for this action, which is a left $\widehat{C_c(G)}=\C G$-module algebra, coincides with the $G$-algebra $\operatorname{BigInd}_H^G(A)$ obtained in \cite{CE}. Then, following the ideas of \cite{CE}, we restrict our attention to the $\C G$- subalgebra $C_c(G,A)^{\C H}\subseteq C(G,A)^{\C H}$. It turns out that for $H$ finite, $C_c(G,A)^{\C H}$ identifies with the $G$-algebra $\operatorname{Ind}_H^G(A) \subseteq \operatorname{BigInd}_H^G(A)$.
After reviewing the concept of Morita equivalence in the context of rings with local units, in Proposition \ref{moritacoeff} we construct a Morita context between 
$C_c(G,A)^{\C H}\#\C G$ and $A\# \C H$, and show that it is an equivalence (see also \cite{CE}*{Theorem 10.4.5}).
In Section \ref{sec3} we study the case of finite algebraic quantum groups, i.e. finite dimensional Hopf algebras. In this context, a compact quantum subgroup of $\H$ is given by a cosemisimple finite dimensional Hopf algebra $\U$, equipped with a surjective homomorphism $\pi:\H\to\U$. Given a left $\hat{\U}$-module algebra $A$ with unit, we apply the actions defined in Section \ref{sec1} to induce a left $\hat{\H}$-action on the algebra of invariants $(A\otimes\H)^{\hat{\U}}$. 
After some preliminary results, we prove Theorem \ref{morita}, which states that there is a Morita equivalence between $(A\otimes\H)^{\hat{\U}}\#\hat{\H}$ and $A\#\hat{\U}$.  

\smallskip
The authors wish to thank Stefan Vaes for his helpful comments.

\smallskip
\textbf{Notation.} All vector spaces and algebras are assumed to be over $\C$.  We write $\otimes$ for the algebraic tensor product $\otimes_{\C}$ and $\iota_X$ for the identity map of the space $X$. Given a vector space $V$, we put $V^*$ for the space of linear functionals $Hom_{\C}(V,\C)$. The left/right actions of a ring $R$ on an algebra $A$ will be denoted by $r\rightharpoonup a$ and $a\leftharpoonup r$, respectively. If $(H,\Delta)$ is a Hopf algebra, we will use Sweedler notation and write $\Delta(h)=\sum  h_1\otimes h_2$.
\section {Algebraic quantum groups} \label{sec1}

In this Section we review from \cite{DVDZ,T, VD2, VD, VD1, VDZ1} the definition of an algebraic quantum group, the construction of its dual and the notions of action, coaction and invariant subalgebra. We also establish the notion of algebraic compact quantum subgroup that we will use in the following sections.
\subsection{Multipier Hopf algebras with invariants}
Let $\G$ be an algebra with non-degenerate product (i.e. $\G x=0$ implies $x=0$ and $y\G=0$ implies $y=0$). The multiplier algebra $M(\G)$ is the largest unital algebra that contains $\G$ as an essential two-sided ideal. Given another algebra with non degenerate product $\H$, a homomorphism $\alpha:\G\to M(\H)$ is called \emph{non-degenerate} if the linear span of $\alpha(\G)\H$ and the linear span of $\H\alpha(\G)$ both are equal to $\H$. In this case, $\alpha$ has a unique extension to $M(\G)$ (\cite{VD2}*{Prop.A.5}). We will also write $\alpha$ for the extension.

\smallskip
An algebra $\G$ with non-degenerate product is called  \emph{multiplier bialgebra} if it is equipped with a non-degenerate homomorphism $\Delta:\G\to M(\G\otimes \G)$ such that:
\begin{enumerate}
    \item $\Delta(\G)(1\otimes \G)$ and $(\G\otimes 1)\Delta(\G)$ are contained in $\G\otimes \G$;\\ here $1$ denotes the unit of $M(\G)$, and the spaces $1\otimes \G$ and  $\G\otimes 1$ are considered as subsets of $M(\G)\otimes M(\G)\subseteq M(\G\otimes \G)$.
    \item $\Delta$ is coassociative: $(\iota_{\G}\otimes\Delta)\circ\Delta=(\Delta\otimes \iota_{\G})\circ\Delta$;\\ here the homomorphisms $\iota_{\G}\otimes\Delta$ and $\Delta\otimes\iota_{\G}$ have been extended to $M(\G\otimes \G)$.
    \end{enumerate}
    A morphism of multiplier bialgebras from $(\G,\Delta_{\G})$ to $(\H,\Delta_{\H})$ is a non-degenerate homomorphism $\alpha:\G\to M(\H)$ such that $\Delta_{\H}\circ \alpha=(\alpha\otimes\alpha)\circ\Delta_{\G}$.
\smallskip
A multiplier bialgebra $(\G,\Delta)$ is a \emph{multiplier Hopf algebra} if the linear maps $T_1,T_2:\G\otimes \G \to \G\otimes \G$ given by
\[T_1(x\otimes y)=\Delta(x)(1\otimes y) \qquad T_2(x\otimes y)=(x\otimes 1)\Delta(y) \]
are bijective. A morphism of multiplier Hopf algebras is a morphism of the underlying multiplier bialgebras. For any multiplier Hopf algebra $(\G,\Delta)$ there are unique linear maps $\epsilon:\G\to \C$ and  $S:\G\to M(\G)$, called \emph{counit} and \emph{antipode} respectively, satisfying:
\begin{align*}
(\epsilon \otimes \iota)(\Delta(x)(1\otimes y))&= xy\\
(\iota \otimes \epsilon)((x\otimes 1)\Delta(y))&= xy\\
m(S\otimes \iota)(\Delta(x)(1\otimes y))&= \epsilon(x)y\\
m(\iota\otimes S)((x\otimes 1)\Delta(y))&= \epsilon(y)x,
\end{align*}
where $m:\G\otimes \G\to \G$ is the multiplication map extended to $M(\G)\otimes \G$ and $\G\otimes M(\G)$. The antipode $S$ is an antihomomorphism. 

\smallskip

Any Hopf algebra is a multiplier Hopf algebra (\cite{T}*{Thm.1.3.18}). Moreover, a unital multiplier Hopf algebra $\G$ is a Hopf algebra, as in this case we have $M(\G)=\G$. So we can think of a multiplier Hopf algebra as the natural extension of a Hopf algebra where the underlying algebra is non unital.

Let $\Delta^{op}$ be the opposite comultiplication, i.e. the composite of $\Delta$ with the flip on $\G\otimes \G$. A multiplier Hopf algebra $(\G,\Delta)$ is called \emph{regular} if $(\G,\Delta^{op})$ is also a multiplier Hopf algebra. In this case we also have that $\Delta(\G)(\G\otimes 1)$ and $(1\otimes\G)\Delta(\G)$ are contained in $\G\otimes\G$. A multiplier Hopf algebra $\G$ is regular if and only if the antipode implements a bijection $S:\G\to \G$ (\cite{VD1}*{Prop. 2.7 and 2.8}). From now on, all multiplier Hopf algebras are assumed to be regular, and all the algebras considered have non degenerate products.

Let $(\G,\Delta)$ be a multiplier Hopf algebra. A non zero linear functional $\varphi:\G\to \C$ is called \emph{left invariant} if 
$$(\iota\otimes\varphi)(\Delta(x)(1\otimes y))=\varphi(x)y \quad \forall x,y \in \G.$$
A non zero linear functional $\psi:\G\to \C$ is called \emph{right invariant} if 
$$(\psi\otimes\iota)(\Delta(x)(1\otimes y))=\psi(x)y \quad \forall x,y \in \G.$$
Note that, if $\varphi$ is a left invariant functional, $\varphi\circ S$ is a right invariant functional. In case $\varphi= \varphi \circ S$, we call the multiplier Hopf algebra \emph{unimodular}. It can be shown that invariant functionals are unique (up to scalar), and they are faithful (if $\varphi(x\G)=0$ or $\varphi(\G x)=0$, then $x=0$), see \cite{VD}. 
Following \cite{DVDZ}, we say that an \emph{algebraic quantum group} is a  multiplier Hopf algebra with invariants. We call an algebraic quantum group \emph{finite} if it is finite dimensional.

    Let $(\G,\Delta)$ be an algebraic quantum group. A non zero element $t\in \G$ is called a \emph{left integral} if $xt=\epsilon(x)t$, for all $x\in \G$. Similarly a non zero element $k$ in $\G$ is called \emph{right integral} if $kx=\epsilon(x)k$, for all $x\in \G$. 
If $t\in \G$ is a left integral, it is unique up to scalars and $S(t)$ is a right integral. We say that $(\G,\Delta)$ is \emph{compact} if $\G$ is unital, and we say that $(\G,\Delta)$ is \emph{discrete} if it has an integral.
A compact algebraic quantum group is a Hopf algebra with invariants. If $\G$ is a finite algebraic quantum group, then it is unital. So, finite algebraic quantum groups are just finite dimensional Hopf algebras. In this case, semisimplicity is equivalent to cosemisimplicity, and by \cite{montg}*{2.4.6}, this is the same as being unimodular.

 \begin{rem} The usual Sweedler notation for Hopf algebras can be extended to multiplier bialgebras, see for example \cite{DV, DVDZ, VD1,VDZ1}. From now on we will use this notation.
\end{rem}
 
\subsection{Duality.} Let $(\G,\Delta)$ be an algebraic quantum group, and let $\varphi$ be a left invariant functional of $\G$. For every $x,y\in \G$, consider the functionals $\varphi_x$ and $\varphi^y$ given by $\varphi_x(z)=\varphi(zx)$, $\varphi^y(z)=\varphi(yz) \ \forall z\in \G$, and set $$\hat{\G}=\{\varphi_x \mid x\in \G\}.$$
 By \cite{VD1}*{Section 3}, we can also write $\hat{\G}=\{\varphi^y \mid y\in \G\}$. If we regard $\hat{\G}$ as a subspace of $\G^*$, the dual of the product and the coproduct of $\G$ gives a product and a coproduct $\hat{\Delta}$ on $\hat{\G}$ such that $(\hat{\G},\hat{\Delta})$ becomes an algebraic quantum group. 
\begin{teo}(\cite{VD}*{Thm. 4.12}) \label{dualqg}  If $(\G,\Delta)$ is an algebraic quantum group, then $(\hat{\G},\hat{\Delta})$ is again an algebraic quantum group, and there is a canonical isomorphism  $$(\hat{\hat{\G}}, \hat{\hat{\Delta}})\cong (\G,\Delta).$$
\end{teo}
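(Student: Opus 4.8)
The plan is to build the dual structure on $\hat{\G}$ by dualizing the multiplication and comultiplication of $\G$, to check that these two structures interact so as to make $(\hat{\G},\hat{\Delta})$ an algebraic quantum group, and then to identify its bidual with $\G$ through the evaluation pairing; the faithfulness and essential uniqueness of the invariant functional $\varphi$ is the tool that powers every verification. Concretely, I would equip $\hat{\G}$ with the convolution product $(\omega\cdot\omega')(x)=(\omega\otimes\omega')(\Delta(x))$ and with the coproduct $\hat{\Delta}$ dual to the multiplication of $\G$, characterized by $\hat{\Delta}(\omega)(x\otimes y)=\omega(xy)$ (interpreted through the pairing so that all expressions remain in $\G\otimes\G$).

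First I would prove that $\hat{\G}$ is a non-degenerate algebra. Writing a general element as $\varphi_a$, one computes $\varphi_a\cdot\varphi_b$ as $(\varphi\otimes\varphi)(\Delta(\cdot)(a\otimes b))$ and then uses left invariance together with the antipode $S$ to rewrite this as $\varphi_c$ for a suitable $c\in\G$; this simultaneously yields closure $\hat{\G}\cdot\hat{\G}\subseteq\hat{\G}$ and, combined with faithfulness of $\varphi$, non-degeneracy of the convolution product.

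Next I would verify that $(\hat{\G},\hat{\Delta})$ is a multiplier Hopf algebra. Coassociativity of $\hat{\Delta}$ is dual to associativity of $\G$ and is formal; non-degeneracy of $\hat{\Delta}$ and the inclusions $\hat{\Delta}(\hat{\G})(1\otimes\hat{\G}),\,(\hat{\G}\otimes1)\hat{\Delta}(\hat{\G})\subseteq\hat{\G}\otimes\hat{\G}$ follow from the regularity of $\G$ (which supplies the analogous inclusions for $\Delta$) together with the explicit description of elements of $\hat{\G}$. I would then show the canonical maps $\hat{T}_1,\hat{T}_2$ are bijective, which makes $\hat{\G}$ a multiplier Hopf algebra and automatically furnishes its counit $\hat{\epsilon}$ and antipode $\hat{S}$ given by $\hat{S}(\omega)=\omega\circ S$. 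Finally, to obtain an algebraic quantum group I would construct a left invariant functional $\hat{\varphi}$ on $\hat{\G}$ by a Fourier-type formula determined by pairing the counit of $\G$ against representing elements, and check left invariance $(\iota\otimes\hat{\varphi})(\hat{\Delta}(\omega)(1\otimes\omega'))=\hat{\varphi}(\omega)\,\omega'$ by testing against arbitrary $x\in\G$ and reducing to the invariance and antipode identities already available in $\G$.

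For biduality I would introduce the evaluation map $\Gamma:\G\to\hat{\hat{\G}}$, $x\mapsto(\omega\mapsto\omega(x))$. Because the whole construction of the dual is symmetric in $\G$ and $\hat{\G}$ through the pairing, running it a second time reproduces the original object: faithfulness of $\varphi$ makes $\Gamma$ injective and identifies its image with all of $\hat{\hat{\G}}$, while a direct computation against the pairing shows $\Gamma$ intertwines both products and both coproducts, so that $\Gamma$ is an isomorphism $(\hat{\hat{\G}},\hat{\hat{\Delta}})\cong(\G,\Delta)$. The main obstacle will be the algebra step: proving that the convolution product closes inside $\hat{\G}$ and is non-degenerate, and that $\hat{\Delta}$ satisfies the multiplier bialgebra inclusions, since each of these requires rewriting expressions of the form $(\varphi\otimes\varphi)(\Delta(x)(a\otimes b))$ using invariance and the antipode while carefully tracking multipliers in the non-unital setting. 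Once that machinery is established, the counit, antipode, integral, and biduality follow comparatively formally by transporting the corresponding facts about $\G$ across the pairing.
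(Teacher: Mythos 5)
The paper itself gives no proof of this statement --- it is quoted directly from Van Daele's \emph{An algebraic framework for group duality} (Thm.~4.12) --- so the only meaningful comparison is with that cited source, and your outline follows it essentially step for step: the convolution product on $\hat{\G}=\{\varphi_x\mid x\in\G\}$, closure and non-degeneracy via faithfulness and invariance of $\varphi$, the multiplier Hopf structure with $\hat{S}(\omega)=\omega\circ S$, a Fourier-type dual invariant functional (in Van Daele's proof $\hat{\psi}(\varphi_x)=\epsilon(x)$, which is \emph{right} invariant, a left one then coming from the antipode), and biduality via the evaluation pairing. The plan is sound and is the standard route, but note that it remains a plan: the steps you yourself flag as the main obstacles --- proving that the convolution product closes inside $\hat{\G}$ in the non-unital multiplier setting, verifying invariance of the dual functional, and establishing surjectivity of the evaluation map onto $\hat{\hat{\G}}$ --- are precisely where the bulk of Van Daele's Sections 3--4 is spent, so none of the genuinely hard content is carried out in your sketch.
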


If $\G$ is a finite algebraic quantum group, $\hat{\G}$ coincides with $\G^*$. So, the previous Theorem generalizes the duality of finite dimensional Hopf algebras.
\subsection{Actions and coactions}\label{actions}

Let $(\G,\Delta)$ be an algebraic quantum group, and $A$ a $\C$-algebra, with or without unit, but with non-degenerate product.

We say that $A$ is a \emph{left $\G$-module algebra} if it satisfies the following conditions:
\begin{enumerate}
\item \label{unital}$A$ is a unital left $\G$-module,  here $A$ unital means $\G A=A$; where \\
$\G A=\{\sum x_i\rightharpoonup a_i \mid x_i\in\G, a_i\in A\}$,
    \item $x\rightharpoonup aa'= \sum (x_1\rightharpoonup a)(x_2\rightharpoonup a')$, $\forall x\in \G, a, a'\in A$.
\end{enumerate}
\emph{Right} $\G$-\emph{module algebras} are defined in a similar way.

\smallskip

Given a left $\G$-module algebra $A$ we can define the \emph{smash product} algebra $A\# \G$ as follows: the underlying vector space of $A\# \G$ is $A\otimes \G$, and the product is given by:
$$(a\#x)(a'\# y)= \sum a(x_1\rightharpoonup a')\# x_2 y.
$$

\smallskip
We say that $A$ is a \emph{right $\G$-comodule algebra} if there exists an injective homomorphism $\delta: A\to M(A\otimes \G)$ such that
\begin{itemize}
\item $\delta(A)(1\otimes \G)\subseteq A\otimes \G$ and $(1\otimes \G)\delta(A)\subseteq A\otimes \G$,
\item $(\delta \otimes \iota_{\G})\circ \delta=(\iota_A\otimes\Delta)\circ\delta$, i.e. the following diagram commutes:
$$
\xymatrix{A\ar[r]^\delta\ar[d]_\delta & M(A\otimes \G)\ar[d]^{\delta\otimes\iota_{\G}}\\M(A\otimes \G)\ar[r]_{\iota_A\otimes \Delta}&M(A\otimes \G\otimes \G)}
$$
\end{itemize}
In this case we will call $\delta$ a \emph{right coaction} of $\G$ on $A$. Similarly one can define \emph{left} $\G$-\emph{comodule algebras}.

We will also use the Sweedler notation for comodule algebras: for all $a\in A$ we will write \\$\delta(a)=\sum a_0\otimes a_1$ and $(\delta \otimes\iota_{\G})(\delta(a))=\sum a_0\otimes a_1\otimes a_2$.

\smallskip

 Let $(\G,\Delta)$ be an algebraic quantum group, and fix $\varphi$ a left invariant functional of $\G$. Suppose we have a right coaction $\delta:A\to M(A\otimes \G)$ of $\G$ on an algebra $A$. Then, we can define a left action of $\hat{\G}$ on $A$ as follows. First recall that every element $\alpha \in \hat{\G}$ can be uniquely expressed as $\alpha=\varphi^y$, for some $y\in \G$. The action is given by:
$$
\alpha\rightharpoonup a = (\iota_A\otimes\varphi)((1\otimes y)\delta(a)), \hspace{0.5cm} \forall\hspace{0.1cm} \alpha=\varphi^y\in \hat{\G}, a\in A.
$$
Using Sweedler notation, we can express the action as $\alpha\rightharpoonup a=\sum \alpha(a_1)a_0$.
Moreover the converse is also true: if $A$ is a left $\hat{\G}$-module algebra, then it is a right $\G$-comodule algebra, and we have the following result, which again generalizes the case of finite dimensional Hopf algebras:
\begin{teo}\label{dualaction}(\cite{VDZ1}*{Thm. 3.3})
    Let $(\G,\Delta)$ be an algebraic quantum group and $A$ an algebra. Then $A$ is a (right/left) $\G$-module algebra iff $A$ is a (left/right) $\hat{\G}$-comodule algebra.
\end{teo}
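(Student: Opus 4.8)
The plan is to exhibit two explicit constructions and show they are mutually inverse. By the biduality $\hat{\hat{\G}}\cong\G$ of Theorem \ref{dualqg} and the evident left--right symmetry of all the definitions involved, it suffices to establish a single correspondence, namely that $A$ is a right $\G$-comodule algebra if and only if it is a left $\hat{\G}$-module algebra. One passage is already displayed before the statement: from a right coaction $\delta(a)=\sum a_0\otimes a_1$ one sets $\alpha\rightharpoonup a=\sum\alpha(a_1)a_0$ for $\alpha\in\hat{\G}$. The task is then to check that this is a left $\hat{\G}$-module algebra structure, to describe the inverse passage from a module to a coaction, and to verify that the two assignments are inverse to each other.

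First I would verify the module-algebra axioms for $\alpha\rightharpoonup a=\sum\alpha(a_1)a_0$, each reducing to a short Sweedler computation in which one of the two defining dualities between $\G$ and $\hat{\G}$ is invoked. Associativity $(\alpha\beta)\rightharpoonup a=\alpha\rightharpoonup(\beta\rightharpoonup a)$ follows from coassociativity of $\delta$ together with the fact that the product of $\hat{\G}$ is dual to the coproduct $\Delta$ of $\G$. The module-algebra identity $\alpha\rightharpoonup(ab)=\sum(\alpha_1\rightharpoonup a)(\alpha_2\rightharpoonup b)$ follows because $\delta$ is a homomorphism, so $\delta(ab)=\sum a_0b_0\otimes a_1b_1$, and because the coproduct $\hat{\Delta}$ of $\hat{\G}$ is dual to the product of $\G$, giving $\alpha(a_1b_1)=\sum\alpha_1(a_1)\alpha_2(b_1)$. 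Unitality $\hat{\G}A=A$ is the only point that is not a purely formal dualization: it uses the covering condition $\delta(A)(1\otimes\G)\subseteq A\otimes\G$ together with the faithfulness of the invariant functional $\varphi$ to exhibit each $a\in A$ as a finite sum $\sum_i\alpha_i\rightharpoonup a_i$.

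The main obstacle is the opposite passage: given a left $\hat{\G}$-module algebra $A$, reconstruct a right coaction $\delta:A\to M(A\otimes\G)$ characterized by $(\iota_A\otimes\alpha)(\delta(a))=\alpha\rightharpoonup a$ for all $\alpha\in\hat{\G}$. In the finite-dimensional case this is immediate: choosing dual bases $\{f_i\}$ of $\G$ and $\{f^i\}$ of $\hat{\G}=\G^*$ one simply puts $\delta(a)=\sum_i(f^i\rightharpoonup a)\otimes f_i$, and then the homomorphism property and coassociativity of $\delta$ dualize the module-algebra identity and the associativity of the action exactly as above. The difficulty in the genuine multiplier setting is that no such finite dual basis exists, so $\delta(a)$ can only be defined as a multiplier of $A\otimes\G$, and one must verify that it is well defined there. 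Concretely, writing every $\alpha\in\hat{\G}$ as $\alpha=\varphi^{y}$ with $y\in\G$, I would specify the legs $\delta(a)(1\otimes y)$ and $(1\otimes y)\delta(a)$ and check the covering conditions $\delta(A)(1\otimes\G)\subseteq A\otimes\G$ and $(1\otimes\G)\delta(A)\subseteq A\otimes\G$. This is where the structure of the algebraic quantum group is essential: the bijectivity of the canonical maps $T_1,T_2$ (the regularity of $\G$) and the faithfulness of $\varphi$ are precisely what produce these legs inside $A\otimes\G$ and force $\delta$ to be injective.

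Finally I would check that the two constructions are mutually inverse. Starting from $\delta$, forming the action, and reconstructing a coaction returns $\delta$, since the reconstructed coaction is pinned down by the same slices $(\iota_A\otimes\alpha)(\delta(a))=\alpha\rightharpoonup a$ and such a multiplier is unique by faithfulness of $\varphi$; starting from an action, the round trip is the mirror statement. The coassociativity of the reconstructed $\delta$ and its homomorphism property then follow by dualizing the associativity of the action and the module-algebra identity. The delicate step throughout is not any individual identity but the passage between honest elements of $A\otimes\G$ and multipliers of $A\otimes\G$, which is controlled entirely by the invariant functionals and the regularity of $\G$.
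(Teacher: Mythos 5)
There is nothing in the paper to compare your argument against: Theorem \ref{dualaction} is not proved in the paper at all, but imported from Van Daele--Zhang (\cite{VDZ1}*{Thm.~3.3}); the only related in-paper material is the displayed formula $\alpha\rightharpoonup a=(\iota_A\otimes\varphi)((1\otimes y)\delta(a))=\sum\alpha(a_1)a_0$ for $\alpha=\varphi^y$, which is precisely your easy direction. Measured against the proof in the cited reference, your outline has the right architecture: slice a coaction by the integral to obtain an action, reconstruct a multiplier-valued coaction from a unital action, and let the regularity of $\G$ and the faithfulness of $\varphi$ control the passage between $A\otimes\G$ and $M(A\otimes\G)$. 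The reduction to a single chirality via biduality (Theorem \ref{dualqg}) and left--right symmetry is also legitimate, since you prove that single correspondence as an equivalence.

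As a proof attempt, however, it has a genuine gap, and it is exactly the step you yourself call the main obstacle: the reconstruction of $\delta\colon A\to M(A\otimes\G)$ from a left $\hat{\G}$-module algebra structure is announced but never carried out. You never write down the legs $\delta(a)(1\otimes y)$ and $(1\otimes y)\delta(a)$ in terms of the action, and this is where essentially all of the content of \cite{VDZ1}*{Thm.~3.3} lives: one needs the unitality $\hat{\G}A=A$ of the given action to produce candidate legs, and then the strong-invariance identities of the integral (not merely its faithfulness) to verify that these legs are well defined, assemble into an injective homomorphism, are coassociative, and satisfy the covering conditions. Saying that regularity and faithfulness ``are precisely what produce these legs'' is a diagnosis of where the difficulty sits, not an argument that resolves it. A second, smaller soft spot is in the forward direction: unitality of the constructed action does not follow from the stated inclusion $(1\otimes\G)\delta(A)\subseteq A\otimes\G$ together with faithfulness of $\varphi$ alone; what one actually uses is the equality $(1\otimes\G)\delta(A)=A\otimes\G$ (or an equivalent fullness statement for the coaction), and that equality itself requires a separate argument from coassociativity, injectivity and regularity. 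So: correct skeleton, consistent with the cited literature, but the two steps that make the theorem nontrivial in the multiplier setting are left unproved.
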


\begin{exa}\label{dualcoaction} Let $(\G,\Delta)$ be an algebraic quantum group and $\varphi$ a left invariant functional. The map $\Delta:\G\to M(\G\otimes \G)$ gives $\G$ the structure of a right $\G$-comodule algebra. The dual left action of $\hat{\G}$ on $\G$ in this case is:
$$
\alpha\rightharpoonup x = (\iota_{\G}\otimes\varphi)((1\otimes y)\Delta(x)), \hspace{0.5cm} \forall\hspace{0.1cm} \alpha=\varphi^y\in \hat{\G}, x\in \G.
$$
    \end{exa}

 \subsection{Compact subgroups and induced actions} 
The notion of quantum subgroup in this setting is not quite clear (see \cite{vaes, dkss} for the definition of \emph{closed quantum subgroup} in the context of locally compact quantum groups and \cite{Chris} for the definition of quantum subgroup for Hopf algebras). Here we consider the following definition of \emph{algebraic compact quantum subgroup}.

\begin{defi}\label{cqs} Let $\G$ and $\mathcal{U}$ be algebraic quantum groups. We call $\mathcal{U}$ a \emph{compact quantum subgroup} of $\G$ if $\mathcal{U}$ is compact, unimodular and there exists a surjective morphism $\pi:\G\to \mathcal{U}$.
\end{defi}
Let $\G$ be an algebraic quantum group and $\U$ a compact quantum subgroup. Note that 
\begin{equation}\label{coaction}\lambda_{\U}=(\pi \otimes\iota_{\G})\Delta: \G\to M(\U\otimes\G)
\end{equation}
defines a structure of left $\U$-comodule algebra on $\G$. Then, by Theorem \eqref{dualaction}, $\G$ has a structure of right $\hat{\U}$-module algebra with the action given by:
\begin{equation}\label{dualbeta}
    x\leftharpoonup \beta= \sum \beta(\pi(x_1))x_2.
\end{equation} 

Suppose that $A$ is a left $\hat{\U}$-module algebra with unit, we want to construct an algebra with an induced left $\hat{\G}$-action. We can define a right action of $\hat{\U}$ on $A\otimes\G$ as follows: 
\begin{equation}\label{actionontensor}
a\otimes x \leftharpoonup \beta= \sum\bar{S}\beta_1 \rightharpoonup a \otimes x\leftharpoonup \beta_2 ,
\end{equation}
here $\overline{S}$ is the antipode of $(\U,\Delta^{op})$. This action endows $A\otimes\G$ with a structure of a right $\hat{\U}$-module algebra. Following \cite{DVDZ}*{Prop. 4.11}, the algebra of invariants for this action is the subalgebra of $M(A\otimes\G)$ defined by:

$$(A\otimes\G)^{\hat{\U}}=\left\{\begin{array}{lll}m\in M(A\otimes\G) \mid & (my)\leftharpoonup \beta =m(y\leftharpoonup \beta) &  \forall y\in A\otimes\G, \forall \beta\in \hat{\U}\\ &
(ym)\leftharpoonup \beta=( y\leftharpoonup \beta)m,&
\end{array}\right\}.
$$
\bigskip

 By Example \eqref{dualcoaction}, $A\otimes\G$ has a structure of left $\hat{\G}$-module algebra with the action: 
\begin{equation}\label{leftaction}
    \alpha\rightharpoonup (a\otimes x)=a\otimes (\alpha\rightharpoonup x).
\end{equation}
Moreover, as the actions \eqref{actionontensor} and \eqref{leftaction} commute, we can define a left action of $\hat{\G}$ on $(A\otimes\G)^{\hat{\U}}$ as follows (see \cite{DVDZ}*{Proposition 4.7}):
\begin{align*}
    (\alpha\rightharpoonup m)x=& \sum\alpha_1\rightharpoonup \left(m(S(\alpha_2)\rightharpoonup x) \right)\\
    x(\alpha\rightharpoonup m)=& \sum \alpha_2 \rightharpoonup\left( (S^{-1}(\alpha_1)\rightharpoonup x)m\right)
\end{align*}

for every $\alpha \in \hat{\G}$, $m\in (A\otimes\G)^{\hat{\U}}$, $x\in A\otimes \G$. 

\bigskip

In the following sections we will study two families of algebraic quantum groups:  $\G=C_c(G)$, for $G$ a group,  and $\G=\H$ a finite algebraic quantum group (i.e. a finite dimensional Hopf algebra). In each case we consider a subalgebra of $(A\otimes\G)^{\hat{\U}}$ that has an induced left $\hat{\G}$-action.

\section{Induction of actions for \texorpdfstring{$C_c(G)$}{Cc(G)}} \label{sec2}

Let $G$ be a group. Write $e$ for the neutral element and $\chi_g$ for the characteristic function of the element $g\in G$. Recall that
\begin{equation}\label{ccg}
C_c(G)=\left\{\sum_{g\in G}{\lambda_g\chi_g}\mid \lambda_g\in\C,\lambda_g \neq 0 \text{ for a finite amount of } g\right\}
\end{equation} is an algebraic quantum group with the following structure:
\begin{multicols}{2}
\begin{align*}
\chi_g \chi_h&=\delta_{g,h}\chi_g\\
\epsilon(\chi_g)&=\delta_{g,e}\\
S(\chi_g)&=\chi_{g^{-1}}
\end{align*}

\columnbreak
\begin{align*}
\Delta_G(\chi_g)&=\sum_{l\in G}\chi_{l}\otimes \chi_{l^{-1}g}\\
\varphi(\chi_g)&=\psi(\chi_g)=1
\end{align*}
\end{multicols}

Note that $\chi_e\in C_c(G)$ is a left (and right) integral, so $C_c(G)$ is discrete.

The dual quantum group $\widehat{C_c(G)}$ can be identified with the Hopf algebra
$$\C G=\left\{\sum_{g\in G} {\lambda_gg \mid \lambda_g}\in\C,\lambda_g \neq 0 \text{ for a finite amount of } g\right\}$$
 with its usual counit and comultiplication
$$\epsilon(g)=1 \qquad  \Delta(g)=g\otimes g.$$
The invariant functionals are given by:
$$\phi(g)=\psi(g)=\delta_{g,e}. $$

Let $H\subseteq G$ be a finite subgroup. In this case, both $C_c(H)$ and $\C H$ are finite dimensional Hopf algebras. Note that $$C_{c}(H)=C(H)=\left\{\sum_{h\in H}{\lambda_h\chi_h}\mid \lambda_h\in\C \right\}.$$
If we write $\pi:C_c(G)\to C(H)$ for the projection, we have that $C(H)$ is a compact quantum subgroup of $C_c(G)$. The left coaction \eqref{coaction} of $C(H)$ on $C_c(G)$  induces the following right action of $\C H$ on $C_c(G)$:

$$ \sum_{g\in G}\lambda_g\chi_g \leftharpoonup h = \sum_{g\in G}\lambda_g\chi_{h^{-1}g}.$$
We can also endow $C_c(G)$ with the structure of a left $\C G$-module algebra with the action:
$$t\rightharpoonup \sum_{g\in G}\lambda_g\chi_g  = \sum_{g\in G}\lambda_g\chi_{gt^{-1}}.  $$

Let $A$ be a left $\C H$-module algebra with unit. The tensor product $A\otimes C_c(G)=C_c(G,A)=\{f:G\to A \mid f(g)\neq 0 \text{ for a finite amount of } g \}$ has a structure of right $\C H$-module algebra with the action (see \eqref{actionontensor}):
$$ (f\leftharpoonup h)(g)=h^{-1}\rightharpoonup f(hg),$$
for every $f\in C_c(G,A)$, $h \in H, g\in G$.

In this case, the action can be extended to the multiplier algebra $M(A\otimes C_c(G))=C(G,A)$. The subalgebra of invariants $C(G,A)^{\C H}$ can be endowed with the following left action of $\C G$:
$$(t\rightharpoonup f)(g)=f(gt), $$
for every $f\in C(G,A)$, $t,g\in G$.

\begin{rem} Let $p: G\to G/H$ be the projection and consider the $G$-algebras defined in \cite{CE}:
$$\operatorname{BigInd}_H^G(A) =\{ f:G\to A \mid h\rightharpoonup f(g)=f(hg)\hspace{0.2cm}  \forall h\in H, \hspace{0.2cm}  \forall g\in G\}$$ and
$$\operatorname{Ind}_H^G(A)=\{f\in \operatorname{BigInd}_H^G(A) \mid |p(\operatorname{supp}(f))|<\infty \},$$
here $\operatorname{supp}(f)=\{g\in G \mid f(g)\neq 0 \}$. Note that, if $H$ is finite, every element of $\operatorname{Ind}_H^G(A)$ has finite support. Hence, for $H$ finite, we have the following identifications:
\begin{align*}
   \operatorname{BigInd}_H^G(A)&=C(G,A)^{\C H}\\
\operatorname{Ind}_H^G(A)&=C_c(G,A)^{\C H}.     
\end{align*}
\end{rem}

\subsection{Morita context} In the following we will use the notion of Morita equivalence between rings with \emph{local units.} Recall that a ring $R$ has local units if for every finite subset $T\subseteq R$ there exists an idempotent $e\in R$ such that $te=et=t$, $\forall t \in T$. Examples of rings with local units are unital rings and discrete algebraic quantum groups (\cite{VDZ2}*{Prop. 3.1}). 

If $R$ is a ring with local units, we write $\operatorname{RMod}$ for the category of unital left $R$-modules (see \eqref{unital} in Section \ref{actions}). We say that two rings with local units $R$ and $S$ are \emph{Morita equivalent} if the categories $\operatorname{RMod}$ and $\operatorname{SMod}$ are equivalent (\cite{a}). 
 A \emph{Morita context} between $R$ and $S$ is given by a unital $R$-$S$-bimodule $P$, a unital $S$-$R$-bimodule $Q$, and two bimodule homomorphisms 
$$\Gamma:P\otimes_{S}Q \to R \qquad \Lambda:Q\otimes_{R}P\to S$$
satisfying:
\begin{align}\label{comp}
    \Gamma(p\otimes q)\rightharpoonup p' &= p\leftharpoonup \Lambda(q\otimes p'), \qquad\forall p,p' \in P, q\in Q.\\
\Lambda(q\otimes p)\rightharpoonup q'&= q \leftharpoonup \Gamma(p\otimes q'), \qquad\forall p \in P, q,q' \in Q.\notag
\end{align}
Two algebras are Morita equivalent if and only if there exists a Morita context between them with both homomorphisms surjective (\cite{willie}*{Prop. 5.1.8}).

\smallskip

 The discrete algebraic quantum group $C_c(G)$ has local units, so the invariant algebra $C_c(G,A)^{\C H}$ also has local units. Hence, the smash product $C_c(G,A)^{\C H}\#\C G$ has local units as well (see the proof of Lemma 5.2 in \cite{VDZ1}). In the next proposition we show that the algebras $C_c(G,A)^{\C H}\# \C G$ and $A \# \C H$ are Morita equivalent by constructing bimodules which implement a Morita context (compare \cite{CE}*{Theorem 10.4.5}).

\begin{prop} \label{moritacoeff}Let $G$ be a group, $H$ a finite subgroup and $A$ a left $\C H$-module algebra with unit. There is a Morita equivalence between $C_c(G,A)^{\C H}\# \C G$  and $A \# \C H$.
\end{prop}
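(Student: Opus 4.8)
The plan is to build an explicit Morita context between $R = C_c(G,A)^{\C H}\#\C G$ and $S = A\#\C H$ and then verify that both pairing maps are surjective. Following the general theory recalled just above, it suffices to exhibit a unital $R$-$S$-bimodule $P$, a unital $S$-$R$-bimodule $Q$, and bimodule maps $\Gamma\colon P\otimes_S Q\to R$, $\Lambda\colon Q\otimes_R P\to S$ satisfying the compatibility conditions \eqref{comp}, with $\Gamma$ and $\Lambda$ surjective. The natural candidates, modeled on the classical imprimitivity construction, are $P = C_c(G,A)$ and $Q = C_c(G,A)$ as vector spaces, each carrying one module structure from $R$ (via the smash product with $\C G$ acting by translation $(t\rightharpoonup f)(g)=f(gt)$ and $C_c(G,A)^{\C H}$ acting by multiplier multiplication) and the other from $S$ (via the $A$-action by pointwise multiplication at $g=e$ and the $\C H$-action coming from \eqref{actionontensor}). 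I would spell out these four actions explicitly in terms of functions on $G$, checking in each case that they are associative and that the two structures on each of $P$ and $Q$ commute, so that $P$ and $Q$ are genuine bimodules.

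Next I would define the two pairings. The map $\Lambda\colon Q\otimes_R P\to A\#\C H$ should be a ``local'' pairing, extracting from two functions $q,p\in C_c(G,A)$ an element of $A$ (by evaluating a suitable product at the identity, averaging over $H$) together with a grouplike piece in $\C H$; concretely $\Lambda$ takes values in $A\#\C H$ because the $\C H$-component records the relative position of the supports in each coset. The map $\Gamma\colon P\otimes_S Q\to C_c(G,A)^{\C H}\#\C G$ should be a ``global'' pairing producing an invariant multiplier together with a $\C G$-component. The essential computation is to confirm that $\Gamma(p\otimes q)$ genuinely lands in the invariant subalgebra $C_c(G,A)^{\C H}=\operatorname{Ind}_H^G(A)$ and has the required finite-support/finite-coset property, and that both $\Gamma$ and $\Lambda$ are $R$-$R$- and $S$-$S$-bimodule homomorphisms. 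All of this reduces to finite sums over the finite subgroup $H$ and bookkeeping of translations, using that $A$ is unital so that one may insert approximate/local units freely.

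The two compatibility identities in \eqref{comp} then become identities between functions on $G$ valued in $A$, and I expect them to follow from a direct substitution once the actions and pairings are written out: the left-hand side and right-hand side both reorganize the same triple product of functions, differing only in whether the $H$-averaging is performed before or after the $G$-translation, and these agree because the two commuting actions on $P$ (resp. $Q$) were arranged precisely to make this so. Here the unitality of $A$ and the finiteness of $H$ are used to guarantee the sums are well defined and that the ``diagonal'' terms survive.

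The main obstacle, and where I would concentrate the careful argument, is \emph{surjectivity} of $\Gamma$ and $\Lambda$. For $\Lambda$ one must produce, for each generator $a\# h\in A\#\C H$, elements $q,p$ whose pairing is $a\# h$; since $H$ is finite one can take functions supported on a single coset and use the unit of $A$ to isolate $a$ and the coset representative to produce $h$. For $\Gamma$, surjectivity onto $C_c(G,A)^{\C H}\#\C G$ is subtler because the target involves the possibly infinite group $G$: one must check that every invariant function in $\operatorname{Ind}_H^G(A)$, smashed with an arbitrary $t\in G$, is hit by a finite sum of pairings $\Gamma(p_i\otimes q_i)$. This is exactly the point at which finiteness of $H$ (so that $\operatorname{Ind}=C_c(G,A)^{\C H}$ consists of finitely supported functions) and the local-unit structure of $C_c(G)$ are indispensable, and it mirrors \cite{CE}*{Theorem 10.4.5}; I would verify it by reducing to functions supported on finitely many cosets and solving the pairing equation coset by coset. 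Once both maps are surjective, the cited criterion \cite{willie}*{Prop. 5.1.8} yields the asserted Morita equivalence.
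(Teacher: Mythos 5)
Your overall strategy coincides with the paper's: a Morita context between $R=C_c(G,A)^{\C H}\#\C G$ and $S=A\#\C H$ implemented by two bimodules built out of $C_c(G,A)$, with both pairings checked to be surjective and the local-unit criterion invoked at the end. But as written there is a genuine gap: the pairings $\Gamma$ and $\Lambda$ are never actually defined. Everything that constitutes the proof --- well-definedness over $\otimes_S$ and $\otimes_R$, the compatibility conditions \eqref{comp}, the fact that $\Gamma$ lands in the invariant subalgebra, and surjectivity --- is deferred to ``once the actions and pairings are written out,'' which is precisely the content that has to be supplied. For comparison, the paper takes $P=C_c(G,A)$ and $Q=A\otimes\C G$ (isomorphic as a vector space to your second copy of $C_c(G,A)$, but presented so that the $A\#\C H$-action is transparent) and sets
$$\Lambda((a\otimes t)\otimes f)=\sum_{h\in H} a\,(h\rightharpoonup f(h^{-1}t))\#h,\qquad \Gamma(f\otimes(a\otimes t))=\sum_{s\in\operatorname{supp}(f)}F_{s,a}\#s^{-1}t,$$
where $F_{s,a}(x)=\sum_{h}(h\rightharpoonup f(h^{-1}x)a)\,\delta_{x,hs}$. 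Without formulas of this kind fixed, none of the claimed verifications can be carried out, so the proposal remains an outline rather than a proof.

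On the point you single out as the main difficulty --- surjectivity of $\Gamma$ onto $C_c(G,A)^{\C H}\#\C G$ when $G$ is infinite --- the concern dissolves once the pairing is explicit: since $H$ is finite, every $f\in C_c(G,A)^{\C H}=\operatorname{Ind}_H^G(A)$ has finite support, and the paper simply exhibits preimages, namely $a\#h=\Lambda\bigl((a\otimes h)\otimes(1_A\otimes\chi_e)\bigr)$ and $f\#k=\Gamma\bigl(\sum_{s\in\operatorname{supp}(f)}f\chi_s\otimes(1_A\otimes sk)\bigr)$; no coset-by-coset solving is required. Your identification of the essential inputs (finiteness of $H$, unitality of $A$, local units on the $\C G$ side) is correct, but the argument is not complete until the displayed maps are written down and the routine checks --- balancedness, bimodule linearity, invariance of $F_{s,a}$, and \eqref{comp} --- are actually performed.
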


\begin{proof} The bimodules implementing the Morita context are $C_c(G,A)$ and  $A\otimes \C G$. 
 $C_c(G,A)$ is an $(C_c(G,A)^{\C H}\#\C G, A\#\C H)$-bimodule with the following actions:
 \begin{align*}
        (f \leftharpoonup a\# h)(g) &= h^{-1}\rightharpoonup (f(hg)a)\\
       ( F\# t \rightharpoonup  f)(g) &=F(g)f(gt)
    \end{align*}

     $A\otimes \C G$ is an $(A\# \C H, C_c(G,A)^{\C H}\#\C G)$-bimodule with the following actions:
     \begin{align*}
     b\# h\rightharpoonup a\otimes g &= b(h\rightharpoonup a)\otimes hg\\
     a\otimes g \leftharpoonup F\# t &= aF(g)\otimes   gt
 \end{align*}

 The bimodule homomorphisms are defined by:

$$ \Lambda: (A\otimes \C G) \otimes_{C_c(G,A)^{\C H}\#\C G} C_c(G,A) \to A\#\C H
$$

$$\Lambda((a\otimes t) \otimes f)=\sum_{h\in H}a (h\rightharpoonup f(h^{-1}t))\# h $$
$$\Gamma: C_c(G,A)\otimes_{A\# \C H}(A\otimes\C G)\to C_c(G,A)^{\C H}\#\C G $$

$$ \Gamma (f\otimes(a\otimes t))=\sum_{s \in \operatorname{supp}(f)} F_{s,a}\# s^{-1}t$$

where $F_{s,a}(x)= \sum_{h} (h\rightharpoonup f(h^{-1}x)a)\delta_{x,hs}.$

We claim that both homomorphisms satisfy conditions \eqref{comp} and are surjective. In fact, given $a\# h \in A\#\C H$ and 

$f\# k \in C_c(G,A)^{\C H}\#\C G$ we have:

\begin{align*}
 a\# h = &\Lambda \left( (a\otimes h) \otimes (1_A\otimes \chi_e)\right),\\
f\# k =& \Gamma \left(\sum_{s\in \operatorname{supp}(f)} f\chi_s \otimes (1_A\otimes sk) \right).
\end{align*}

\end{proof}

\section{Induction of actions for finite algebraic quantum groups}\label{sec3}

Let $(\H,\Delta_{\H})$ be a finite algebraic quantum group, i.e. a finite dimensional Hopf algebra. In this case, the dual $\hat{\H}$ is the space of linear functionals $\H^*=\operatorname{Hom}_{\C}(\H,\C)$ with the following multiplication and comultiplication:
\begin{eqnarray*}
    (f\star g)(h)&=& (f\otimes g)(\Delta_{\H}(h))\\
\Delta_{\hat{\H}}(f)(h\otimes k)&=&f(hk)
\end{eqnarray*}
$\forall f,g \in \hat{\H}$, $\forall h,k \in \H$.
If $t\in \H$ is a left integral, $\hat{\varphi}=\text{ev}_t$ is a left invariant functional of $\H$. We will consider $\H$ as an $\hat{\H}$-bimodule algebra with the actions:

\begin{align*}
 \alpha\rightharpoonup h&=\sum h_{1}\alpha(h_2)\cr
 h\leftharpoonup\mu &= \sum \mu(h_1)h_2
\end{align*}
$\forall h\in \H$, $\forall \alpha, \mu \in \hat{\H}$.

Suppose we have a surjective homomorphism $\pi:\H \to \U$, where $\U$ is a finite algebraic quantum group. Then we can endow $\H$ with the structure of a right $\hat{\U}$-module algebra with the following action (see \eqref{dualbeta}) $$h\leftharpoonup\beta= \sum \beta(\pi(h_1))h_2.$$

 Given a left $\hat{\U}$-module algebra $A$ with unit, we can endow  $A\otimes\H$ with a structure of a right $\hat{\U}$-module (see \eqref{actionontensor}):
\begin{equation}\label{action}
a\otimes h\leftharpoonup\beta=\sum \overline{S}(\beta_1)\rightharpoonup a\otimes h\leftharpoonup \beta_2.
 \end{equation}

    The algebra of invariants $(A\otimes\H)^{\hat{\U}}$ has a structure of left $\hat{\H}$-module algebra with action $$\alpha\rightharpoonup\sum a_i\otimes h_i= \sum a_i\otimes \alpha\rightharpoonup h_i.$$ 

\begin{rem} For Hopf algebras there is a definition of \emph{quantum subgroup} (see \cite{Chris}*{Def. 3.3}), and it coincides with Definition \ref{cqs} in the context of finite algebraic quantum groups.
    \end{rem}
    In the rest of this section we will show that for $\U$ a compact quantum subgroup of $\H$, we have a Morita equivalence between $A\#\hat{\U}$ and $(A\otimes\H)^{\hat{\U}}\#\hat{\G}$.

\begin{lema}\label{context}
    Let $\H$ and $\U$ be finite algebraic quantum groups, $\pi:\H\to \U$ a surjective homomorphism and $A$ a left $\hat{\U}$-module algebra. Then, $A\otimes \H$ is a right $A\#\hat{\U}$-module and there is a Morita context between $A\#\hat{\U}$ and $\operatorname{End}_{A\#\hat{\U}}(A\otimes\H)$. 
\end{lema}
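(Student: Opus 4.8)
The statement has two parts. For the first, I must equip $A\otimes\H$ with a right $A\#\hat{\U}$-module structure. The natural candidate, read off from the bimodule actions in Proposition \ref{moritacoeff} (the $A\otimes\C G$ side there), is to let $A\#\hat{\U}$ act on the right of $A\otimes\H$ by using the $A$-multiplication on the first tensor factor together with the right $\hat{\U}$-action on $\H$ coming from $h\leftharpoonup\beta=\sum\beta(\pi(h_1))h_2$. Concretely I would define, for $a\otimes h\in A\otimes\H$ and $b\#\beta\in A\#\hat{\U}$, an action of the form
\[
(a\otimes h)\leftharpoonup(b\#\beta)=\sum a(\,?\,)\otimes (h\leftharpoonup\beta),
\]
and then verify the module axiom $(m\leftharpoonup u)\leftharpoonup u'=m\leftharpoonup(uu')$ using the smash-product multiplication $(b\#\beta)(b'\#\beta')=\sum b(\beta_1\rightharpoonup b')\#\beta_2\star\beta'$. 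The verification reduces to coassociativity of $\Delta_{\hat U}$ together with the left-module-algebra compatibility of $A$; this is routine but must be done carefully to fix the correct placement of the $\hat\U$-action on the $A$-factor.

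**Second, construct the Morita context with $S:=\operatorname{End}_{A\#\hat\U}(A\otimes\H)$.** Here the general principle is that any right $R$-module $P$ yields a canonical context between $R$ and $S=\operatorname{End}_R(P)$, with the two bimodules $P={}_S(A\otimes\H)_R$ and $P^\ast=\operatorname{Hom}_R(P,R)$ paired by evaluation. I would set $Q=\operatorname{Hom}_{A\#\hat\U}(A\otimes\H,\,A\#\hat\U)$, which is an $(A\#\hat\U,S)$-bimodule, and define
\[
\Lambda\colon Q\otimes_S P\to A\#\hat\U,\qquad \Lambda(\xi\otimes m)=\xi(m),
\]
\[
\Gamma\colon P\otimes_{A\#\hat\U}Q\to S,\qquad \Gamma(m\otimes\xi)(n)=m\,\xi(n).
\]
The associativity relations \eqref{comp} for these evaluation/coevaluation maps hold automatically: $\Gamma(m\otimes\xi)\rightharpoonup m'=m\,\xi(m')=m\leftharpoonup\Lambda(\xi\otimes m')$ and similarly on the other side, so \emph{no} computation specific to $\H,\U$ is needed for the axioms themselves. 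Since $S$ is defined precisely as the endomorphism ring, $\Gamma$ is automatically surjective (indeed $\Lambda(\xi\otimes m)\cdot$ recovers the identity endomorphism from suitable $\xi,m$), and this gives a genuine \emph{context}, which is all the lemma claims.

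**The main obstacle is the first part: pinning down the correct action formula and checking that $A\otimes\H$ really is a right $A\#\hat\U$-module.** The subtlety is that $A$ is a $\hat\U$-module while $\H$ is a $\hat\U$-module via $\pi$, and the smash-product multiplication intertwines the $\hat\U$-coproduct across both factors; I expect the right formula to involve applying a component of $\hat\U$ (pulled through $\pi$) to the $A$-factor as well, mirroring the twisted action $\bar S(\beta_1)\rightharpoonup a\otimes h\leftharpoonup\beta_2$ of \eqref{action}. Getting this twist exactly right—so that the invariant subalgebra $(A\otimes\H)^{\hat\U}$ later appears as the correct endomorphism ring in Theorem \ref{morita}—is the delicate point; once the module axiom is confirmed, the context in the second part is formal. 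Note that, unlike in Proposition \ref{moritacoeff}, I would \emph{not} attempt to prove surjectivity of $\Lambda$ at this stage, since the lemma only asserts the existence of a context, deferring the equivalence to the subsequent development.
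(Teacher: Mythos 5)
Your approach is essentially the paper's: the paper also endows $A\otimes\H$ with a right $A\#\hat{\U}$-action, takes $Q=\operatorname{Hom}_{A\#\hat{\U}}(A\otimes\H,A\#\hat{\U})$, and uses the standard evaluation maps $\Lambda(f\otimes m)=f(m)$ and $\Gamma(m\otimes f)(n)=m\leftharpoonup f(n)$, with the compatibility conditions \eqref{comp} holding formally. Two points to fix. First, the action you leave as ``$?$'' is, in the paper, $a\otimes h\leftharpoonup b\#\beta=\sum\overline{S}(\beta_1)\rightharpoonup(ab)\otimes h\leftharpoonup\beta_2$; that is, one multiplies $ab$ in $A$ first and then applies the same twist as in \eqref{action} to the product. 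You correctly anticipated the shape, but a complete proof must commit to this formula and check the module axiom against the smash product multiplication. Second, your parenthetical claim that $\Gamma$ is ``automatically surjective'' because $S$ is the endomorphism ring is false: by the dual basis lemma, $\operatorname{id}_{A\otimes\H}$ lies in the image of $\Gamma$ if and only if $A\otimes\H$ is finitely generated projective over $A\#\hat{\U}$, which is not established at this stage (and is exactly what Corollary \ref{trivialaction} later works to prove in a special case). This error does not damage the lemma, since a Morita context does not require surjectivity, and you rightly refrain from claiming an equivalence here.
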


\begin{proof}
   We can endow $A\otimes \H$ with structure of  right $A\#\hat{\U}$-module by defining:
    \begin{equation}\label{raction}a\otimes h \leftharpoonup b\#\beta:= \sum\overline{S}(\beta_1)\rightharpoonup (ab)\otimes h \leftharpoonup \beta_2.\end{equation}
Hence, $A\otimes\H$ becomes an $\left(\operatorname{End}_{A\#\hat{\U}}(A\otimes \H), A\#\hat{\U}\right)$-bimodule, with the left action of $\operatorname{End}_{A\#\hat{\U}}(A\otimes \H)$ given by evaluation.

Let us now consider $\operatorname{Hom}_{A\#\hat{\U}}(A\otimes\H, A\#\hat{\U})$, which is an $\left(A\#\hat{\U},\operatorname{End}_{A\#\hat{\U}}(A\otimes \H)\right)$-bimodule with the following actions:
  \begin{align*}
       (b\#\beta\rightharpoonup f)(a\otimes h)=&(b\#\beta)f(a\otimes h)\cr 
       (f\leftharpoonup T)(a\otimes h)=& f(T(a\otimes h)).
   \end{align*}
The bimodule homomorphisms implementing the Morita context are:
 \begin{align*}
       \Lambda: \operatorname{Hom}_{A\#\hat{\U}}(A\otimes\H, A\#\hat{\U}) &\otimes_{\operatorname{End}_{A\#\hat{\U}}(A\otimes \H)}A\otimes\H\to A\#\hat{\U}\cr
    \Lambda(f\otimes (a\otimes h))&=f(a\otimes h) 
      \end{align*}
   \begin{align*}   
      \Gamma: A\otimes \H  \otimes_{A\#\hat{\U}}  \operatorname{Hom}_{A\#\hat{\U}}&(A\otimes\H, A\#\hat{\U})\to \operatorname{End}_{A\#\hat{\U}}(A\otimes \H)\cr
      \Gamma((a\otimes h)\otimes f) (a'\otimes h')&=(a\otimes h)\leftharpoonup f(a'\otimes h').
   \end{align*}

It is straightforward to check that the compatibility conditions \eqref{comp} are satisfied.

\end{proof}

\begin{coro}\label{trivialaction}
    Let $\H$ be finite algebraic quantum group an $\U$ a compact quantum subgroup. For any unital algebra $A$, with trivial action of \hspace{0.01cm} $\hat{\U}$, there is a Morita equivalence between $A\#\hat{\U}$ and $\operatorname{End}_{A\#\hat{\U}}(A\otimes\H)$. 
    \end{coro}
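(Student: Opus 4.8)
The plan is to upgrade the Morita context of Lemma~\ref{context} to a Morita equivalence by verifying, under the triviality hypothesis, that both structure maps $\Lambda$ and $\Gamma$ are surjective; by the criterion \cite{willie}*{Prop. 5.1.8} this is exactly what is required. Writing $R=A\#\hat{\U}$, $M=A\otimes\H$ and $S=\operatorname{End}_R(M)$, the map $\Lambda(f\otimes m)=f(m)$ has image the trace ideal $\sum_{f\in\operatorname{Hom}_R(M,R)}f(M)$, so $\Lambda$ is surjective iff $M$ is a generator of $\operatorname{RMod}$. The image of $\Gamma$ is the two-sided ideal of $S$ spanned by the maps $m'\mapsto m\leftharpoonup f(m')$; this ideal contains $\operatorname{id}_M$ precisely when $M$ is finitely generated projective (dual basis lemma), so $\Gamma$ is surjective iff $M_R$ is finitely generated projective. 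Thus it suffices to show that $M=A\otimes\H$ is a finitely generated projective generator over $R$.

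Next I would unwind the smash products under the trivial action. Since $\gamma\rightharpoonup a=\epsilon(\gamma)a$ and $\epsilon\circ\overline{S}=\epsilon$, the smash multiplication collapses to $(a\#\alpha)(a'\#\beta)=aa'\#\alpha\beta$, so $R=A\#\hat{\U}\cong A\otimes\hat{\U}$ as an algebra, and the right $R$-action \eqref{raction} becomes $a\otimes h\leftharpoonup b\#\beta=ab\otimes(h\leftharpoonup\beta)$. Hence $M$ is the external tensor product of the right regular module $A_A$ with $\H$ regarded as a right $\hat{\U}$-module via $h\leftharpoonup\beta=\sum\beta(\pi(h_1))h_2$. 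The progenerator property then descends from the second factor: if $\H\oplus X\cong\hat{\U}^{\,k}$ as right $\hat{\U}$-modules, then $M\oplus(A\otimes X)\cong R^{\,k}$, so $M$ is finitely generated projective; and if $\hat{\U}\oplus Y\cong\H^{\,k}$, then $R\cong A\otimes\hat{\U}$ is a direct summand of $M^{\,k}$, so $M$ is a generator. Everything therefore reduces to showing that $\H$ is a finitely generated projective generator over $\hat{\U}$.

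For this last point I would use that $\U$ is cosemisimple, so its dual $\hat{\U}=\U^*$ is semisimple; as $\H$ is finite dimensional it is automatically finitely generated projective over $\hat{\U}$. For the generator property, observe that the given action on $\H$ is dual to the left $\U$-comodule structure $\rho(h)=\sum\pi(h_1)\otimes h_2$, and that $\pi:\H\to\U$ is a surjective morphism of left $\U$-comodules onto the regular comodule, since $\pi$ is a coalgebra map: $(\iota_{\U}\otimes\pi)\rho(h)=(\pi\otimes\pi)\Delta_{\H}(h)=\Delta_{\U}(\pi(h))$. Passing to the equivalent category of right $\hat{\U}$-modules, $\pi$ becomes a surjective $\hat{\U}$-linear map onto $\U$, and $\U$ is a generator over $\hat{\U}$ (indeed free of rank one by the fundamental theorem of Hopf modules, or because the regular comodule contains every simple $\U$-comodule). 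A trace-ideal computation then yields $\operatorname{tr}_{\hat{\U}}(\H)\supseteq\operatorname{tr}_{\hat{\U}}(\U)=\hat{\U}$, so $\H$ generates, completing the argument.

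The main obstacle is the generator half of the progenerator claim: projectivity is immediate from cosemisimplicity of $\U$, but showing that $\H$ generates requires identifying the given $\hat{\U}$-action with the comodule structure coming from $\pi$, checking that $\pi$ is a comodule surjection onto the regular comodule, and invoking that $\U$ is a generator (equivalently free of rank one) over $\hat{\U}$. Everything else—the surjectivity-versus-progenerator dictionary and the reduction through the external tensor product with $A_A$—is routine once the trivial-action structures $R\cong A\otimes\hat{\U}$ and $M=A\otimes\H$ have been identified.
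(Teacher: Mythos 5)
Your proposal is correct and follows essentially the same route as the paper: both reduce the statement to showing that $A\otimes\H$ is a finitely generated projective generator over $A\#\hat{\U}\cong A\otimes\hat{\U}$, obtain projectivity from the semisimplicity of $\hat{\U}$ (forced by the unimodularity of $\U$), and obtain the generator property from the surjection $\pi:\H\to\U$ together with the identification of $\U$ with $\hat{\U}$ as a right $\hat{\U}$-module, which the paper writes explicitly as $u\mapsto\gamma^{S(u)}$ with $\gamma$ the invariant functional. The only difference is organizational: you first establish the progenerator property for $\H$ over $\hat{\U}$ and then tensor with $A$, while the paper works directly with $A\otimes\H$ over $A\otimes\hat{\U}$ (in particular proving projectivity by an explicit lifting).
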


\begin{proof} 
  In this case, the smash product $A\#\hat{\U}$ coincides with the tensor product $A\otimes \hat{\U}$. By the previous lemma, we have a Morita context between $A\otimes\hat{\U}$ and $\operatorname{End}_{A\otimes\hat{\U}}(A\otimes\H)$. In order to obtain a Morita equivalence it is enough to show that $A\otimes \H$ is a finitely generated projective generator as a right $A\otimes\hat{\U}$-module (\cite{willie}*{Thm.5.2.7}).

Let $\pi:\H\to\U$ be the surjective homomorphism of the Definition \ref{cqs} and let $\gamma$ be the (left and right) invariant functional of $\U$. We have a surjective homomorphism of $A\otimes\hat{\U}$-modules $p:A\otimes\H \to A\otimes\hat{\U}$, given by $p(a\otimes h)=a\otimes\gamma^{S(\pi(h))}$. This proves that $A\otimes\H$ is a generator.

Now consider a diagram of right $A\otimes\hat{\U}$-module homomorphisms,  with $g$ surjective:

\begin{equation}\label{diag}
    \xymatrix{& N\ar@{->>}[d]^{g}\\A\otimes\H \ar[r]_f&M}\end{equation}
By restricting the action of $A\otimes\hat{\U}$ to $1_A\otimes\hat{\U}$, \eqref{diag} becomes a diagram of $\hat{\U}$-module homomorphisms. Write $i:1_A\otimes \H \to A\otimes\H$ for the inclusion. Then, as the unimodularity of $\U$ implies the semisimplicity of $\hat{\U}$, there is a homomorphism of $\hat{\U}$-modules $\tilde{f}:1_A\otimes\H\to N$ such that $g\tilde{f}=fi$. Define $\bar{f}:A\otimes\H \to N$ by $\bar{f}(a\otimes h)= \tilde{f}(1\otimes h)\leftharpoonup a\otimes\epsilon$. It is easy to check that $\bar{f}$ is an $A\otimes\hat{\U}$-module homomorphism which satisfies $g\bar{f}=f$. Thus $A\otimes\H$ is projective as $A\otimes\hat{\U}$-module.
Finally, as $\H$ has finite dimension, $A\otimes\H$ is finitely generated as $A\otimes\hat{\U}$-module.
\end{proof}

\begin{teo}\label{morita}
  Let $\H$ be finite algebraic quantum group, $\U$ a compact quantum subgroup and $A$ a left $\hat{\U}$-module algebra with unit. The algebras $A\#\hat{\U}$ and $(A\otimes\H)^{\hat{\U}}\#\hat{\H}$ are Morita equivalent, where  $(A\otimes\H)^{\hat{\U}}$ is the invariant subalgebra for the action \eqref{action}. 
 
 \end{teo}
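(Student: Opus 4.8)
The plan is to factor the desired equivalence through the endomorphism ring $E:=\operatorname{End}_{A\#\hat{\U}}(A\otimes\H)$. By Lemma \ref{context} there is already a Morita context between $A\#\hat{\U}$ and $E$, implemented by the bimodule $A\otimes\H$. Two things then remain: (a) upgrade this context to a genuine Morita equivalence, and (b) identify $E$ with $(A\otimes\H)^{\hat{\U}}\#\hat{\H}$. Conceptually, $A\otimes\H$ will be a $\left((A\otimes\H)^{\hat{\U}}\#\hat{\H},\,A\#\hat{\U}\right)$-bimodule realizing each algebra as the centralizer of the other.

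For (a), I would show that $A\otimes\H$ is a finitely generated projective generator as a right $A\#\hat{\U}$-module and invoke \cite{willie}*{Thm.5.2.7}; the context of Lemma \ref{context} is then automatically an equivalence. The argument is the one used in Corollary \ref{trivialaction} for the trivial action, and it transfers essentially verbatim to the present setting, because the two facts driving that proof survive for a nontrivial action: the subalgebra $1_A\#\hat{\U}\subseteq A\#\hat{\U}$ is still isomorphic to $\hat{\U}$ (as $\beta\rightharpoonup 1_A=\epsilon(\beta)1_A$ forces $(1_A\#\beta)(1_A\#\beta')=1_A\#\beta\beta'$), and the identity $(1_A\otimes h)\leftharpoonup(a\#\epsilon)=a\otimes h$ still holds by \eqref{raction}. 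Concretely, generation comes from an explicit surjection $A\otimes\H\to A\#\hat{\U}$ built from the invariant functional $\gamma$ of $\U$ and from $\pi$, analogous to the map $p$ in Corollary \ref{trivialaction}; projectivity comes from restricting a lifting diagram like \eqref{diag} to the semisimple algebra $\hat{\U}$ (semisimple since $\U$ is unimodular), lifting there, and re-extending along the right $A$-action using $1_A$; finite generation is immediate from $\dim\H<\infty$.

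For (b), I would define $\Phi:(A\otimes\H)^{\hat{\U}}\#\hat{\H}\to E$ by $\Phi(m\#\alpha)(y)=m\,(\alpha\rightharpoonup y)$, where $m\in(A\otimes\H)^{\hat{\U}}$ acts as a left multiplier on $A\otimes\H$ and $\alpha\in\hat{\H}$ acts by the left $\hat{\H}$-action \eqref{leftaction}. The map $\Phi(m\#\alpha)$ is right $A\#\hat{\U}$-linear for three reasons, all already recorded in the excerpt: the left $\hat{\H}$-action \eqref{leftaction} touches only the $\H$-factor and hence is right $A$-linear; the actions \eqref{actionontensor} and \eqref{leftaction} commute, so $\alpha\rightharpoonup(y\leftharpoonup\beta)=(\alpha\rightharpoonup y)\leftharpoonup\beta$; and the defining relations of $(A\otimes\H)^{\hat{\U}}$ say precisely that $m\,(z\leftharpoonup\beta)=(mz)\leftharpoonup\beta$ and $m\,(z b)=(mz)b$. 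That $\Phi$ is an algebra homomorphism then follows from the definition of the smash product together with the module-algebra axioms and the formulas defining the $\hat{\H}$-action on $(A\otimes\H)^{\hat{\U}}$.

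The main obstacle is proving that $\Phi$ is bijective. I would establish this by constructing an explicit two-sided inverse from a left integral $t\in\H$ and dual bases $\{e_i\}$ of $\H$ and $\{e^i\}$ of $\hat{\H}$: given $T\in E$, I decompose its $\H$-dependence along the $\hat{\H}$-action and check that the resulting coefficient multipliers necessarily lie in $(A\otimes\H)^{\hat{\U}}$ precisely because $T$ commutes with the right $\hat{\U}$-action. Injectivity then follows since $\hat{\H}=\H^*$ separates the points of $\H$ and the multiplier action on $A\otimes\H$ is faithful. As a consistency check, taking $A=\C$ with the trivial action recovers the classical isomorphism $\H\#\hat{\H}\cong\operatorname{End}_{\C}(\H)$, both sides having dimension $(\dim\H)^2$. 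Once $\Phi$ is an isomorphism, composing it with the equivalence from part (a) yields the asserted Morita equivalence between $A\#\hat{\U}$ and $(A\otimes\H)^{\hat{\U}}\#\hat{\H}$.
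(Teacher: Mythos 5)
Your overall architecture --- factor through $E=\operatorname{End}_{A\#\hat{\U}}(A\otimes\H)$, identify $E$ with $(A\otimes\H)^{\hat{\U}}\#\hat{\H}$ by decomposing a $\hat{\U}$-equivariant $T$ along dual bases and checking the coefficient multipliers are invariant, then feed this into Lemma \ref{context} --- is exactly the paper's, and your part (b) matches its computation. The gap is in part (a), where you claim the progenerator argument of Corollary \ref{trivialaction} ``transfers essentially verbatim'' to a nontrivial action. The projectivity half does transfer (your observations that $1_A\#\hat{\U}\cong\hat{\U}$ and that $b\#\beta=\sum(1\#\beta_2)\bigl((\overline{S}(\beta_1)\rightharpoonup b)\#\epsilon\bigr)$ are what make the re-extension of $\tilde f$ work). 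The generator half does not: the explicit surjection $p(a\otimes h)=a\#\gamma^{S(\pi(h))}$ of Corollary \ref{trivialaction} is right $A\#\hat{\U}$-linear only when the action on $A$ is trivial. Indeed, by \eqref{raction} one has $(a\otimes h)\leftharpoonup(1\#\beta)=\sum\overline{S}(\beta_1)\rightharpoonup a\otimes(h\leftharpoonup\beta_2)$, whose image under $p$ carries $\overline{S}(\beta_1)\rightharpoonup a$ in the first leg, whereas $(a\#\gamma^{S(\pi(h))})(1\#\beta)=a\#\bigl(\gamma^{S(\pi(h))}\star\beta\bigr)$ leaves the first leg untouched; already for $\beta$ a nontrivial character of $\U$ acting nontrivially on $a$ these disagree. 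So the promised ``explicit surjection built from $\gamma$ and $\pi$, analogous to $p$'' does not exist as stated, and no substitute is supplied; this is precisely the step your proof needs and does not have.

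The paper sidesteps this difficulty rather than solving it head-on: it establishes the progenerator property (hence the Morita equivalence with $\operatorname{End}$) only for the \emph{trivial} action, applies that to the invariant subalgebra $A^{\hat{\U}}$, and then bootstraps surjectivity of the two context maps $\Lambda_A$ and $\Gamma_A$ for general $A$ from the factorizations $a\#\beta=(a\#\epsilon)(1\#\beta)$ and $\sum_i(b_i\otimes h_i)\#\alpha=\sum_i\bigl((b_i\otimes h_i)\#\epsilon\bigr)\bigl((1\otimes 1)\#\alpha\bigr)$, using the inclusions $M_{A^{\hat{\U}}}\subseteq M_A$, $N_{A^{\hat{\U}}}\subseteq N_A$ and the fact that $\Lambda_A,\Gamma_A$ are bimodule homomorphisms. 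To repair your argument you should either adopt this reduction to $A^{\hat{\U}}$, or exhibit an actual right $A\#\hat{\U}$-linear surjection $A\otimes\H\twoheadrightarrow A\#\hat{\U}$ for the twisted module structure \eqref{raction}; the latter requires a genuinely new formula, not the map $p$.
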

  


\begin{proof} Let us first note that $(A\otimes\H)^{\hat{\U}}\#\hat{\H}$ can be identified with a subalgebra of $\operatorname{End}_{A\#\hat{\U}}(A\otimes\H)$:
	
	$$\left(\sum_i (a_i\otimes h_i)\#\alpha \right)(b\otimes k)= \sum_i a_ib \otimes h_i(\alpha\rightharpoonup k),$$
	for every $\sum_i (a_i\otimes h_i)\#\alpha \in (A\otimes\H)^{\hat{\U}}\#\hat{\H}$, and  $b\otimes k \in A\otimes\H$.\\
	
	On the other hand, using the isomorphisms:
	\begin{itemize}
		\item[-] $\displaystyle{\operatorname{End}_{A\#\hat{\U}}(A\otimes\H)\cong\operatorname{Hom}_{\hat{\U}}(\H,A\otimes\H)},$
		\item[-] $\displaystyle{\operatorname{Hom}(\H,A\otimes\H)\cong A\otimes\operatorname{End}(\H)}$  (see \cite{nb}*{Prop.10, Ch.I, Secc. 2.9}),
		\item[-] $ A\otimes\operatorname{End}(\H)\cong A\otimes (\H\otimes\hat{\H})$
	\end{itemize}
	 every element $T \in \operatorname{End}_{A\#\hat{\U}}(A\otimes\H)$ can be written as $T=\sum_{i,j} a_i\otimes h_{ij}\otimes \alpha_{ij}$ for some $a_i\in A$, $h_{ij}\in\H$ and $\alpha_{ij}\in \hat{\H}$. With this notation we have:
	$$T(k)=\sum_{i,j}a_i\otimes h_{ij}\alpha_{ij}(k),$$
	$\forall k\in \H$.
	Moreover, as $T$ is a homomorphism of $\hat{\U}$-modules, it verifies:
	\begin{align}\label{beta}
		T(k\leftharpoonup \beta)= &T(k)\leftharpoonup \beta\notag\\
		\sum_{i,j}a_i\otimes h_{ij}\alpha_{ij}(k\leftharpoonup\beta)=& \sum_{ij}\bar{S}(\beta_1)\rightharpoonup a_i \otimes (h_{ij}\alpha_{ij}(k))\leftharpoonup\beta_2 .
	\end{align}
	Note that for each $i$, $f_i=\sum_j h_{ij}\otimes \alpha_{ij} \in \operatorname{End}(\H)$ using the third isomorphism previously described, and by \cite{M}*{Thm. 1.7.4}, if $\left\{ v^l\right\}_{l=1}^n$ is a basis of $\H$ and $\left\{\delta^l\right\}_{l=1}^n$ is the dual basis of $\hat{\H}$ we have:
	$$f_i(k)=\sum_{j,l}h_{ij}\bar{S}(v^l_1)\alpha_{ij}(v^l_2)(\delta^l\rightharpoonup k)= \sum_l \underbrace{\left( \sum_j h_{ij}\bar{S}(v^l_1)\alpha_{ij}(v^l_2)\right)}_{q_i^l}(\delta^l\rightharpoonup k).$$
	Hence, to show that $T$ belongs to $(A\otimes\H)^{\hat{\U}}\#\hat{\H}$ it suffices to prove that for every $l$, $\sum_i a_i \otimes q_i^l \in (A\otimes\H)^{\hat{\U}}$, that is
	$\bigl(\sum_i a_i \otimes q_i^l\bigr)\leftharpoonup\beta=\epsilon(\beta)\sum_i a_i \otimes q_i^l$.\\
	Based on the definitions of $q_i^l$ and $\leftharpoonup$, it follows that 
	$$\Bigl(\sum_i a_i \otimes q_i^l\Bigr)\leftharpoonup\beta=\sum_i \bar{S}\beta_1\rightharpoonup a_i\otimes \Bigl[\Bigl(\sum_jh_{ij}\bar{S}\bigl(v_1^l\bigr)\alpha_{ij}\bigl(v_2^l\bigr)\Bigr)\leftharpoonup\beta_2\Bigr]$$
	Using the structure of right $\hat{\U}$-module algebra of $\H$ it coincides with
	$$\Bigl[\sum_{i,j}\bigl(\bar{S}\beta_1\rightharpoonup a_i\bigr)\otimes \bigl(h_{ij}\alpha_{ij}\bigl(v_2^l\bigr)\leftharpoonup\beta_2\bigr)\Bigr]\Bigl(1\otimes \bar{S}\bigl(v_1^l\bigr)\leftharpoonup\beta_3\Bigr)$$ 
	Applying (\ref{beta}) the preceding expression can be reformulated as
	$$\Bigl[\sum_{i,j} a_i\otimes h_{ij}\alpha_{ij}\bigl(v_2^l\leftharpoonup\beta_1\bigr)\Bigr]\Bigl(1\otimes \bar{S}\bigl(v_1^l\bigr)\leftharpoonup\beta_2\Bigr)$$ 
	After applying the definitions of the actions and utilizing the properties of counit and antipode, we arrive at the expression
	$$\epsilon(\beta)\sum_ia_i\otimes \Bigl[\sum_jh_{ij}\alpha_{ij}\bigl(v_2^l\bigr)\bar{S}\bigl(v_1^l\bigr)\Bigr]=\epsilon(\beta)\sum_ia_i\otimes q_i^l.$$	
	Then we can identify $(A\otimes\H)^{\hat{\U}}\#\hat{\H}$ with $\operatorname{End}_{A\#\hat{\U}}(A\otimes\H)$. This identification, together with Lemma \ref{context}, shows that there is a Morita context between $A\#\hat{\U}$ and $(A\otimes\H)^{\hat{\U}}\#\hat{\H}$. 
	Upon replacing $A$ with the subalgebra $A^{\hat{\U}}$, the Morita context transforms into a Morita equivalence (Corollary \ref{trivialaction}).
	  In this case the bimodules of the context are $M_{A^{\hat\U}}=A^{\hat\U}\otimes\H$ and $N_{A^{\hat\U}}=\operatorname{Hom}_{A^{\hat{\U}}\otimes\hat{\U}}(A^{\hat{\U}}\otimes\H,A^{\hat\U}\otimes \hat{\U})=\operatorname{Hom}_{\hat{\U}}(\H,A^{\hat\U}\otimes \hat{\U})$, and the surjective bimodule homomorphisms:  
	$$\Lambda_{A^{\hat{\U}}}:N_{A^{\hat{\U}}}\otimes_{\left(A^{\hat{\U}}\otimes\H\right)^{\hat{\U}}\#\hat{\H} }M_{A^{\hat{\U}}}\to A^{\hat{\U}}\otimes\hat{\U},$$	
	$$\Gamma_{A^{\hat{\U}}}: M_{A^{\hat{\U}}} \otimes_{A^{\hat{\U}}\otimes\hat{\U}} N_{A^{\hat{\U}}}\to \left(A^{\hat{\U}}\otimes\H\right)^{\hat{\U}}\#\hat{\H}.$$	
	Let us now consider the Morita context for $A$, and write:
	$M_A= A\otimes \H$ and $N_A=\operatorname{Hom}_{A\#\hat{\U}}(A\otimes\H,A\#\hat{\U})=\operatorname{Hom}_{\hat{\U}}(\H,A\#\hat{\U})$ for the bimodules, and $$\Lambda_A:N_A\otimes_{(A\otimes\H)^{\hat{\U}}\#\hat{\H} }M_A\to A\#\hat{\U}$$
	$$\Gamma_A: M_A \otimes_{A\#\hat{\U}} N_A\to (A\otimes\H)^{\hat{\U}}\#\hat{\H}$$
	for the bimodule homomorphisms.
	
	Note that $M_{A^{\hat{\U}}}\subseteq M_A$ and $N_{A^{\hat{\U}}}\subseteq N_A$. Moreover, for every $a\#\beta \in A\#\hat{\U}$ and every $\sum_i(b_i\otimes h_i)\#\alpha \in (A\otimes\H)^{\hat{\U}}\#\hat{\H}$ we have:
	\begin{align*}
		a\#\beta= &(a\#\epsilon)(1\#\beta), \quad \text{with } 1\#\beta \in A^{\hat{\U}}\otimes\hat{\U},\\\sum_i(b_i\otimes h_i)\#\alpha= &\sum_i\left((b_i\otimes h_i)\#\epsilon\right)\left((1\otimes 1)\#\alpha\right),  \text{with } (1\otimes1)\#\alpha \in (A\otimes\H)^{\hat{\U}}\#\hat{\H}.
	\end{align*}
	
	Using the fact that both $\Lambda_A$ and $\Gamma_A$ are bimodule homomorphisms, and that $\Lambda_{A^{\hat{\U}}}$ and $\Gamma_{A^{\hat{\U}}}$ are surjective, we can conclude that $\Lambda_A$ and $\Gamma_A$ are also surjective. This proves that $(A\otimes\H)^{\hat{\U}}\#\hat{\H}$ and $A\#\hat{\U}$ are Morita equivalent.

\end{proof}

\begin{bibdiv}
\begin{biblist}

\bib{a}{article}{
author={G.D., Abrams },
title={Morita equivalence for rings with local units},
journal={Commun. Algebra},
volume={11},
pages={801-837},
year={1983}
}


\bib{nb}{book}{
author={N., Bourbaki},
title={Elements of Mathematics: Commutative algebra},
  number={v. 2},
  isbn={9780201006445},
 year={1972},
  publisher={Hermann}
}

\bib{Chris}{article}{
 author={G., Christodoulou},
 title={Subgroups of quantum groups},
 Journal = {Q. J. Math.},
 ISSN = {0033-5606},
 Volume = {70},
 Number = {2},
 Pages = {509--533},
 Year = {2018},
 Language = {English},
 DOI = {10.1093/qmath/hay054},
}

\bib{willie}{book}{
    title        = {\'Algebra II $+ 1/2$},
    author       = {G., Corti\~nas},
    year         = {2021},
    publisher    = {Depto. de Matem\'atica, FCEyN, UBA},
    series       = {Cursos y seminarios de matem\'atica},
    volume       = {13},
    }

\bib{CE}{article}{
author = {G., Corti\~nas},
author={E., Ellis},
title ={Isomorphism conjectures with proper coefficients},
journal ={Journal of Pure and Applied Algebra},
volume = {218},
number = {7},
pages = {1224-1263},
year = {2014},
doi = {https://doi.org/10.1016/j.jpaa.2013.11.016}
}


\bib{dkss}{article}{
author={M., Daws},
author={P., Kasprzak},
author={A., Skalski},
author={P., M. Soltan},
title={Closed quantum
subgroups of locally compact quantum groups},
journal={Adv. Math.},
volume={231},
year={2012},
number={6},
pages={3473–3501}}

\bib{DV}{article}{
author={B., Drabant},
author={A., Van Daele},
title={Pairing and quantum double of multiplier Hopf algebras},
journal={Algebr. Represent. Theory 4},
year={2001},
pages={109–132},
}

\bib{DVDZ}{article}{
author={B., Drabant},
author={A., Van Daele},
author={Y., Zhang},
title={Actions of multiplier Hopf algebras},
journal={Commun. Algebra },
volume={27},
number={9},
pages={4117–4172},
year={1999}
}



\bib{M}{book}{
author={S., Majid},
title={Foundations of quantum group theory},
year={1995},
publisher={Cambridge University Press}
}

\bib{montg}{book}{
   author={S. Montgomery},
   title={Hopf algebras and their actions on rings},
   series={CBMS Regional Conference Series in Mathematics},
   volume={82},
   publisher={Published for the Conference Board of the Mathematical Sciences, Washington, DC; by the American Mathematical Society, Providence, RI},
   date={1993},
   pages={xiv+238},
   isbn={978-1-4704-2442-8},
   review={\MR{MR1243637}},
   doi={https://doi.org/10.1090/cbms/082},
}

\bib{T}{book}{
author={T., Timmermann},
title={An invitation to quantum groups and duality},
publisher={European Mathematical Society},
year={2008},
}

\bib{vaes}{article}{
author={S., Vaes},
title={A new approach to induction and imprimitivity results},
journal={J. Funct. Anal.},
volume={229},
year={2005},
pages={317-374}
}
\bib{VD2}{article}{
author={A., Van Daele},
title={Multiplier Hopf algebras},
journal={Trans. Am. Math. Soc},
volume={342},
number={2},
year={1994},
pages={917-932}
}

\bib{VD}{article}{
author={A., Van Daele},
title={An algebraic framework for group duality},
journal={Adv. Math.},
volume={140},
number={2},
year={1998},
pages={323-366}
}

\bib{VD1}{article}{
author={A., Van Daele},
title={Tools for working with multiplier Hopf algebras},
journal={Arab. J. Sci. Eng},
volume={33},
number={2C},
year={2008},
pages={505–527}
}
\bib{VD3}{article}{
  title={From Hopf algebras to topological quantum groups. A short history, various aspects and some problems},
  author={A., Van Daele},
  journal={Banach Center Publications},
  year={2019},
  url={https://api.semanticscholar.org/CorpusID:119680186}
}

\bib{VDZ1}{article}{
author={A., Van Daele and Y.Zhang},
title={Galois Theory for Multiplier Hopf Algebras with Integrals},
journal={Algebr. Represent. Theory},
volume={2},
year={1999},
pages={83–106}
}
\bib{VDZ2}{article}{
author={A., Van Daele and Y.Zhang},
year={1999},
title={Multiplier Hopf algebras of discrete type},
journal={Journal of Algebra},
volume={214},
pages={400–417}
}
\end{biblist}
\end{bibdiv}

\end{document}